\documentclass[11pt]{article}
\usepackage{amsmath,amsthm,amssymb,amscd,epsfig,url}
\usepackage{fullpage}
\usepackage{times}
\newtheorem{theorem}{Theorem}[section]
\newtheorem{lemma}[theorem]{Lemma}
\newtheorem{proposition}[theorem]{Proposition}
\newtheorem{corollary}[theorem]{Corollary}

\newtheorem{definition}[theorem]{Definition}

              
\def\<{{\langle}} \def\>{{\rangle}}       
\setlength{\textheight}{9in}               


\begin{document}

\author{Aaron Potechin \\
MIT\\
potechin@mit.edu}

\title{A Note on a Problem of Erd\H{o}s and Rothschild}
\maketitle
\begin{abstract}
A set of $q$ triangles sharing a common edge is a called a book of size $q$. Letting $bk(G)$ denote the size of the largest book in a graph $G$, Erd\H{o}s and Rothschild \cite{erdostwo} asked what the minimal value of $bk(G)$ is for graphs $G$ with $n$ vertices and 
a set number of edges where every edge is contained in at least one triangle. In this paper, we show that for any graph $G$ with $n$ 
vertices and $\frac{n^2}{4} - nf(n)$ edges where every edge is contained in at least one triangle, 
$bk(G) \geq \Omega\left(\min{\{\frac{n}{\sqrt{f(n)}}, \frac{n^2}{f(n)^2}\}}\right)$.
\end{abstract}

\thispagestyle{empty}
\noindent \textbf{Acknowledgement:}\\
This material is based on work supported by the National Science Foundation Graduate Research Fellowship 
under Grant No. 0645960.
\newpage
\section{Introduction}\label{intro}
A set of $q$ triangles sharing a common edge is called a book of size $q$. Erd\H{o}s \cite{erdosone} started the study of books in graphs and this study has since attracted a great deal of attention in extremal graph theory (see e.g. \cite{bollobas}, \cite{moreerdosone}, \cite{moreerdostwo}, \cite{russianpaper}) and graph Ramsey theory (see e.g. \cite{ramseyone}, \cite{ramseytwo}, \cite{ramseythree}, \cite{ramseyfour}, \cite{ramseyfive}, \cite{ramseysix}, \cite{ramseyseven}).

Erd\H{o}s and Rothschild \cite{erdostwo} considered the problem of bounding $h(n,c)$, which is defined as follows.
\begin{definition} \noindent
\begin{enumerate}
\item Let $bk(G)$ denote the size of the largest book in a graph $G$.
\item Let $h(n,c)$ denote the minimum value of $bk(G)$ over all graphs on $n$ vertices with more than $cn^2$ edges such that every edge is contained in at least one triangle. 
\end{enumerate}
\end{definition}
This problem received considerable attention (see e.g. the Erd\H{o}s problem papers \cite{erdostwo}, \cite{erdosthree}, \cite{erdosfour} and the book \cite{erdosbook}). In terms of lower bounds, Szemer\'{e}di used his regularity lemma to show 
that for all fixed $c < \frac{1}{4}$, $h(n,c) \to \infty$ as $n \to \infty$. Ruzsa and Szemer\'{e}di \cite{ruzsa} further 
showed that this implies Roth's theorem, that every subset of $[1,n]$ without a 3-term arithmetic progression has size $o(n)$, 
as well as the (6,3)-theorem which states that every 3-uniform hypergraph on $n$ vertices in which the union of the endpoints of 
any 3 edges has at least 6 vertices must have $o(n^2)$ edges. Fox (see the end of the introduction of \cite{fox}) recently strengthened the quantitative bounds on $h(n,c)$ to be $2^{\Omega(\log^{*}{n})}$ rather than $(\log^{*}{n})^{\Omega(1)}$. For the case $c \geq \frac{1}{4}$, Edwards \cite{edwards} and Khad\u{z}iivanov, Nikiforov \cite{russianpaper} independently showed that $h(n,c) \geq \frac{n}{6}$ for all $c \geq \frac{1}{4}$.

In terms of upper bounds, Alon and Trotter (see \cite{erdosfour}) showed that for any $c < \frac{1}{4}$, $h(n,c)$ is $O(\sqrt{n})$. Fox 
and Loh \cite{fox} recently strengthened this to show that for any fixed $c < \frac{1}{4}$, $h(n,c)$ is $n^{O(\frac{1}{\log{\log{n}}})}$. Thus, there is a threshold for this problem at $c = \frac{1}{4}$.
\subsection{Previous Work and Our Results}\label{results}
In this paper we examine what happens at this threshold by posing the problem as follows.
\begin{definition}
Given a function $f: \mathbb{Z^{+}} \to \mathbb{R^{+}}$, define $\gamma(n,f)$ to be the minimal value of $bk(G)$ over all graphs with $n$ vertices and at least $\lceil\frac{n^2}{4} - nf(n)\rceil$ edges such that every edge is contained in at least one triangle.
\end{definition}
Bollob\'{a}s and Nikiforov \cite{bollobas} showed the following bounds. For any $\epsilon > 0$ and $0 < c < \frac{2}{5}$, 
if $f(n)$ is $\Theta(n^c)$ for all $n$ then for all sufficiently large $n$, $(1 - \epsilon)\frac{n}{2\sqrt{2f(n)}} < \gamma(n,f) < 
(1 + \epsilon)\frac{n}{2\sqrt{2f(n)}}$. In fact, the upper bound comes from a graph described by Erd\H{o}s \cite{erdosthree} and applies whenever $f(n)$ is $\Theta(n^c)$ for any $c \in (0,1)$.

We extend the lower bounds of Bollob\'{a}s and Nikiforov \cite{bollobas} by obtaining the 
following bounds on $\gamma(n,f)$
\begin{theorem}\label{mainresult}
If $G$ is a graph with exactly $\frac{n^2}{4} - nf(n)$ edges where each edge is contained in at least one triangle and $f(n) \leq \frac{n}{1000}$ then either $b(G) > \frac{n}{1000}$ or $f(n)(f(n) + bk(G))bk(G) \geq \frac{n^2}{1250}$
\end{theorem}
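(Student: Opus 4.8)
The plan is to assume $b := bk(G) \le n/1000$ and to prove $f(f+b)b \ge n^2/1250$, where $f := f(n)$ and $e := \tfrac{n^2}{4} - nf$ is the number of edges. Fix a vertex $v$ of maximum degree; since $2e = \sum_u d(u)$, we have $\Delta := d(v) \ge \tfrac n2 - 2f$. Put $A := N(v)$ and $B := V(G)\setminus A$ (so $v \in B$, $|A| = \Delta$, $|B| = n-\Delta$). Three facts about $A$ will be used. First, $G[A]$ has maximum degree at most $b$: for $x \in A$ the edge $vx$ lies in $|N(v)\cap N(x)| = |N(x)\cap A|$ triangles, so the degree of $x$ in $G[A]$ is at most $b$. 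Second, $G[A]$ has minimum degree at least $1$, since $vx$ lies in a triangle whose third vertex must lie in $A$; hence $\tfrac\Delta2 \le e(G[A]) \le \tfrac{\Delta b}{2}$. Third, since a copy of $K_r$ forces a book of size $r-2$, the graph $G$ is $K_{b+3}$-free, so Tur\'an's theorem gives $e(G[S]) \le \bigl(1-\tfrac1{b+2}\bigr)\tfrac{|S|^2}{2}$ for every $S$, and in particular a bound on $e(G[B])$.

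Next I would set up two counting inequalities relating $e(G[A])$, $e(G[B])$ and $e(A,B)$ (whose sum is $e$). (i) For an edge $xx'$ of $G[A]$, restricting $|N(x)\cap N(x')| \le b$ to $B$ gives $|N_B(x)| + |N_B(x')| \le |B| + b$, so the two endpoints jointly miss at least $|B|-b$ of their potential $B$-neighbours; summing over the edges of $G[A]$ and using that every vertex has degree at most $b$ in $G[A]$ yields
\[
 b\bigl(\Delta\,|B| - e(A,B)\bigr) \;\ge\; e(G[A])\,\bigl(|B|-b\bigr).
\]
(ii) Every $A$–$B$ edge lies in a triangle with exactly one ``within-side'' edge (an edge of $G[A]$ or of $G[B]$), and a fixed within-side edge lies in at most $b$ triangles reaching the opposite side, each accounting for two $A$–$B$ edges; hence $e(A,B) \le 2b\bigl(e(G[A]) + e(G[B])\bigr)$, which together with $e(G[A]) + e(G[B]) + e(A,B) = e$ gives the lower bound $e(G[A]) + e(G[B]) \ge \tfrac{e}{2b+1}$ on the number of within-side edges.

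With $u := |B| = n-\Delta$, note $u \le \tfrac n2 + 2f$. Substituting $e = \tfrac{n^2}{4} - nf$ and feeding the bounds $\tfrac\Delta2 \le e(G[A]) \le \tfrac{\Delta b}{2}$, inequality (i), the Tur\'an bound on $e(G[B])$, and $e(G[A]) + e(G[B]) \ge \tfrac{e}{2b+1}$ into the identity $e(G[A]) + e(G[B]) + e(A,B) = e$, one reduces everything to a single inequality quadratic in $u$. Since that quadratic is monotone on the admissible range $u \le \tfrac n2 + 2f$, its extreme case is $u = \tfrac n2 + 2f$, i.e. $\Delta = \tfrac n2 - 2f$; plugging this in and simplifying is intended to produce $f(f+b)b \ge \tfrac{n^2}{1250}$.

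The hard part is this last step once $b$ is not very small: the Tur\'an bound on $e(G[B])$ is too wasteful for $b \ge 2$ (on its own it only closes the case $b=1$), so $e(G[B])$ must be controlled more precisely. The natural remedy is to iterate—find a vertex $v_1$ of large degree in $G[B]$, observe that $G[N_B(v_1)]$ again has maximum degree at most $b$, peel it off, and repeat until the leftover part is genuinely sparse, charging the accumulated within-side edges against the edge deficit $nf$ and against the bound $\tfrac{e}{2b+1}$. Making this recursion terminate with only a constant-factor loss, so that the final balance of $f$, $b$ and $n$ still yields the stated inequality, is the technical crux; everything else is elementary counting.
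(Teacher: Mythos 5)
Your preliminary observations are all correct: $G[A]$ has maximum degree at most $b$ and minimum degree at least $1$, the inclusion--exclusion inequality (i), and the triangle-charging inequality (ii) giving $e(G[A])+e(G[B])\ge \frac{e}{2b+1}$ are sound, as is the observation that $G$ is $K_{b+3}$-free. But the proposal does not constitute a proof: the step that is supposed to ``produce $f(f+b)b\ge \frac{n^2}{1250}$'' is only asserted, and you yourself identify the unresolved crux --- controlling $e(G[B])$ and making the peeling recursion close with constant-factor loss. That crux is the entire theorem. Concretely, the target inequality has the shape (number of special vertices) $\times$ (their degree) $\times$ (book bound) $\gtrsim n^2$, where the special vertices must number only $O(f(n))$ and have degree only $O(f(n)+b)$. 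Nothing in your decomposition produces such a set: $|B|\approx \frac n2$, the Tur\'an bound on $e(G[B])$ is weaker than the truth by a factor of order $b$, and the bound $e(G[A])\le \frac{\Delta b}{2}=O(nb)$ can vastly exceed $O(f(f+b))$ in the relevant parameter range (e.g.\ $f\approx b\approx\sqrt n$), so inequality (ii) cannot be turned into the stated bound without a genuinely new input. The proposed iteration inside $B$ is plausible in spirit, but you give no accounting of how the within-side edges accumulated over the recursion are charged against $nf$, nor why the process terminates before the losses swamp the bound.

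For comparison, the paper closes exactly this gap by a different route: it first proves that any triangle whose degree sum exceeds $n$ forces $bk(G)\ge \frac{d_1+d_2+d_3-n}{3}$, uses the Andr\'asfai--Erd\H{o}s--S\'os theorem to conclude that the induced subgraph on vertices of degree greater than $\frac{2n}{5}$ is bipartite, and then uses a second-moment (variance) argument together with the edge bound $d(v_1)+d(v_2)\le n+bk(G)$ to show there are at most $20f(n)$ vertices of degree at most $\frac{2n}{5}$. That $O(f(n))$-sized set is precisely the ingredient your argument lacks; once it is available, counting ``well-behaved'' triangles (two high-degree vertices, one vertex of degree at most $5(f(n)+bk(G))$) in two ways gives the theorem directly, with no recursion. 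I would suggest either importing such a structural step (a bound of $O(f(n))$ on the number of low-degree vertices) into your framework, or accepting that the single-neighborhood decomposition around one maximum-degree vertex is too coarse for this problem.
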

\begin{corollary}
If $\frac{n^2}{4} - nf(n)$ is an integer and $f(n) \leq \frac{n}{1000}$ then 
$\gamma(n,f) \geq \min\{\frac{n}{50\sqrt{f(n)}}, \frac{n^2}{2500{f(n)^2}}, \frac{n}{1000}\}$
\end{corollary}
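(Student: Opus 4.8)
The plan is to derive the corollary directly from Theorem~\ref{mainresult} by a short monotonicity reduction followed by an elementary case split. Fix $n$, and let $G$ be an arbitrary graph on $n$ vertices with $m$ edges, every edge contained in a triangle, where $m \ge \lceil \frac{n^2}{4} - nf(n)\rceil = \frac{n^2}{4} - nf(n)$ (the equality using the integrality hypothesis). Write $b = bk(G)$. The goal is to show $b \ge \min\{\frac{n}{50\sqrt{f(n)}},\, \frac{n^2}{2500 f(n)^2},\, \frac{n}{1000}\}$, since taking the minimum over all such $G$ then yields the asserted bound on $\gamma(n,f)$.

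First I would dispose of graphs with many edges: if $m \ge \frac{n^2}{4}$, then the lower bound $h(n,\tfrac14) \ge \frac n6$ of Edwards and of Khad\u{z}iivanov--Nikiforov recalled in the introduction gives $b \ge \frac n6 \ge \frac{n}{1000}$, which is one of the terms in the minimum, so we are done. Hence I may assume $m < \frac{n^2}{4}$ and set $g := \frac1n\big(\frac{n^2}{4} - m\big)$, so that $G$ has \emph{exactly} $\frac{n^2}{4} - ng$ edges and $0 < g \le f(n) \le \frac{n}{1000}$. Now Theorem~\ref{mainresult}, applied with $g$ playing the role of $f(n)$, tells us that either $b > \frac{n}{1000}$ — in which case we are again done — or $g(g+b)b \ge \frac{n^2}{1250}$. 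In the latter case, since $g \le f(n)$ and all factors are nonnegative we have $g(g+b)b \le f(n)(f(n)+b)b$, hence
\[
f(n)^2\, b \;+\; f(n)\, b^2 \;=\; f(n)\,(f(n)+b)\,b \;\ge\; \frac{n^2}{1250}.
\]

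From here it is pure arithmetic: at least one of the two summands on the left is $\ge \frac{n^2}{2500}$. If $f(n)\, b^2 \ge \frac{n^2}{2500}$ then $b \ge \frac{n}{50\sqrt{f(n)}}$; if instead $f(n)^2\, b \ge \frac{n^2}{2500}$ then $b \ge \frac{n^2}{2500 f(n)^2}$. Either way $b \ge \min\{\frac{n}{50\sqrt{f(n)}},\, \frac{n^2}{2500 f(n)^2}\}$, and combining with the cases already settled ($b > \frac n{1000}$) gives the claimed inequality. The only step requiring genuine care — the "hard part", such as it is — is the reduction from "at least $\frac{n^2}{4} - nf(n)$ edges" to "exactly that many": one cannot simply delete edges without possibly destroying the hypothesis that every edge lies in a triangle, so instead I apply the theorem to $G$ with its true deficiency parameter $g \le f(n)$ and use monotonicity of $x \mapsto x(x+b)b$ on $[0,\infty)$ to pass back to $f(n)$. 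Everything else is bookkeeping, and I anticipate no real obstacle.
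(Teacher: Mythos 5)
Your proposal is correct, and its concluding arithmetic is equivalent to the paper's: the paper splits into the cases $bk(G) \geq f(n)$ and $bk(G) \leq f(n)$ and bounds $f(n)+bk(G)$ by twice the larger quantity, while you expand $f(n)(f(n)+b)b = f(n)^2 b + f(n)b^2$ and note that one summand must be at least half of $\frac{n^2}{1250}$; both routes land on exactly the thresholds $\frac{n}{50\sqrt{f(n)}}$ and $\frac{n^2}{2500 f(n)^2}$. The genuine difference is your opening reduction. The paper's one-line proof tacitly applies Theorem~\ref{mainresult} as if every graph competing in the definition of $\gamma(n,f)$ had \emph{exactly} $\frac{n^2}{4}-nf(n)$ edges, whereas the definition only requires at least that many; your device of invoking the theorem with the true deficiency parameter $g \leq f(n)$ and then using monotonicity of $x \mapsto x(x+b)b$ to pass back to $f(n)$, together with the appeal to Edwards and Khad\u{z}iivanov--Nikiforov when $m \geq \frac{n^2}{4}$, closes that gap cleanly. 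The only pedantic caveat is that the bound $h(n,\frac{1}{4}) \geq \frac{n}{6}$ as recalled in the introduction concerns graphs with strictly more than $\frac{n^2}{4}$ edges, so the boundary case $m = \frac{n^2}{4}$ should either be folded into your second case with $g = 0$ (where the theorem's dichotomy forces $b > \frac{n}{1000}$ outright, since $g(g+b)b = 0$) or cited from the precise form of those results. In short, your version proves the corollary as actually stated, for $\gamma(n,f)$ over all admissible graphs, at the cost of a few extra lines; the paper's version buys brevity by eliding the ``at least versus exactly'' issue.
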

\begin{proof}
For any graph $G$, if $bk(G) \leq \frac{n}{1000}$ then either $bk(G) \geq f(n)$ or $bk(G) \leq f(n)$. In the first case, 
$2f(n)bk(G)^2 \geq f(n)(f(n) + bk(G))bk(G) \geq \frac{n^2}{1250}$ so $bk(G) \geq \frac{n}{50\sqrt{f(n)}}$. In the second case, 
$2{f(n)^2}bk(G) \geq f(n)(f(n) + bk(G))bk(G) \geq \frac{n^2}{1250}$ so $bk(G) \geq \frac{n^2}{2500{f(n)^2}}$
\end{proof}
\begin{corollary}
For any $c \in (0,1)$, if $f(n)$ is $\Theta(n^c)$ for all $n$ then $\gamma(n,f)$ is $\Theta(n^{1 - \frac{c}{2}})$ if $c \leq \frac{2}{3}$ 
and $\gamma(n,f)$ is $\Omega(n^{2 - 2c})$ if $c \geq \frac{2}{3}$.
\end{corollary}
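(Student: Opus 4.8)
The plan is to derive this corollary from the preceding corollary (the lower bound $\gamma(n,f) \geq \min\{\frac{n}{50\sqrt{f(n)}}, \frac{n^2}{2500 f(n)^2}, \frac{n}{1000}\}$) together with the upper bound of Erd\H{o}s and Bollob\'{a}s--Nikiforov quoted in the introduction. Throughout I fix constants $a, b > 0$ with $a n^c \leq f(n) \leq b n^c$ for all $n$, which exist because $f$ is $\Theta(n^c)$.

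First I would dispose of the integrality hypothesis of the previous corollary. Since $\gamma(n,f)$ depends on $f$ only through the integer $m = \lceil \frac{n^2}{4} - nf(n)\rceil$, I would replace $f$ by $f^*(n) = \frac{1}{n}(\frac{n^2}{4} - m)$, so that $\frac{n^2}{4} - nf^*(n) = m$ is an integer and $\lceil\frac{n^2}{4} - nf^*(n)\rceil = m = \lceil \frac{n^2}{4} - nf(n)\rceil$; hence $\gamma(n, f) = \gamma(n, f^*)$ directly from the definition of $\gamma$. A short computation gives $f^*(n) \in (f(n) - \frac{1}{n}, f(n)]$, so $f^*$ is again $\Theta(n^c)$ (the perturbation is $O(1/n)$, negligible against $n^c$ since $c > 0$), and $f^*(n) \leq \frac{n}{1000}$ for all large $n$ because $c < 1$. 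Applying the previous corollary to $f^*$ then yields, for all sufficiently large $n$, $\gamma(n,f) \geq \min\{\frac{n}{50\sqrt{f^*(n)}}, \frac{n^2}{2500 f^*(n)^2}, \frac{n}{1000}\}$.

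Next I would identify the binding term. Substituting $f^*(n) = \Theta(n^c)$ gives the three orders $\frac{n}{\sqrt{f^*(n)}} = \Theta(n^{1-c/2})$, $\frac{n^2}{f^*(n)^2} = \Theta(n^{2-2c})$, and $\frac{n}{1000} = \Theta(n)$. Since $1 - \frac{c}{2} < 1$ for every $c \in (0,1)$, the third term is never the smallest, so the minimum is governed by comparing the exponents $1 - \frac{c}{2}$ and $2 - 2c$. A one-line calculation shows $1 - \frac{c}{2} \leq 2 - 2c$ exactly when $c \leq \frac{2}{3}$. Thus for $c \leq \frac{2}{3}$ the minimum is of order $n^{1-c/2}$, giving $\gamma(n,f) = \Omega(n^{1-c/2})$, while for $c \geq \frac{2}{3}$ it is of order $n^{2-2c}$, giving $\gamma(n,f) = \Omega(n^{2-2c})$ (the two orders agree at $c = \frac23$, consistent with the overlap of the two cases). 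These are exactly the claimed lower bounds.

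Finally, for the range $c \leq \frac{2}{3}$ I would supply the matching upper bound. The introduction records that the Erd\H{o}s construction (as analyzed by Bollob\'{a}s and Nikiforov) gives $\gamma(n,f) < (1+\epsilon)\frac{n}{2\sqrt{2 f(n)}}$ for every $c \in (0,1)$ and all large $n$; with $f(n) = \Theta(n^c)$ this is $O(n^{1-c/2})$. Combined with the lower bound $\Omega(n^{1-c/2})$ just obtained, this gives $\gamma(n,f) = \Theta(n^{1-c/2})$ for $c \leq \frac{2}{3}$. The only delicate points here are bookkeeping rather than substance: one must check that the replacement of $f$ by $f^*$ preserves $\Theta(n^c)$ growth (it does, as $c > 0$), and that the cited upper bound really covers the full interval $(0,1)$ and not merely the subinterval $(0, \frac25)$ on which the sharp two-sided Bollob\'{a}s--Nikiforov estimate is stated. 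For $c > \frac23$ no matching upper bound is available, which is precisely why the corollary asserts only $\Omega(n^{2-2c})$ in that regime.
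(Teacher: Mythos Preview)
Your proposal is correct and is exactly the intended derivation: the paper states this corollary without proof, leaving it as an immediate consequence of the preceding corollary together with the Erd\H{o}s upper bound quoted in the introduction, and you have filled in precisely those details. Your handling of the integrality hypothesis via $f^*$ and your identification of the binding term in the minimum are both sound, and the paper explicitly confirms that the Erd\H{o}s upper bound applies for all $c\in(0,1)$, so there is no gap there either.
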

\subsection{Proof sketch}\label{sketch}
The idea behind our bound is as follows. If both $f(n)$ and $bk(G)$ are small then roughly speaking 
$G$ will have the following structure. It will consist of a large set of vertices of high degree 
(degree at least $\frac{n}{2} - O(f(n) + bk(G))$) and a set of at most $O(f(n))$ vertices with low degree (degree at most $O(f(n) + bk(G))$) 
where the induced subgraph on the vertices of high degree is bipartite. The bound now follows from counting the 
number of triangles in the graph such that two of its vertices are high degree and one vertex is low degree.

On the one hand, there are $\Omega(n^2)$ edges between vertices of high degree. Each such edge is contained in a triangle 
and the third vertex must be low degree. Thus we must have at least $\Omega(n^2)$ such triangles. On the other hand, each 
such triangle must contain two edges between a low degree vertex and a high degree vertex. 
The number of such edges is at most the number of low degree vertices (which is at most $O(f(n))$) times the degree of these vertices 
(which is at most $O(f(n) + bk(G))$). Each such edge appears in at most $bk(G)$ triangles so we have that there are 
$O(f(n)(f(n) + bk(G))bk(G))$ such triangles and the result follows.

Unfortunately, the graph $G$ may not quite have this structure. The main difficulty is in showing that the structure of $G$ is 
close to the structure described above and that this is sufficient to prove our bounds.
\section{Proof of Thoerem \ref{mainresult}}\label{proof}
Throughout this section, we will assume that $G$ is a graph with $n$ vertices and $(\frac{1}{4} - \frac{f(n)}{n})n^2$ edges 
such that every edge of $G$ is in at least one triangle yet $bk(G)$ is small. The structure of of $G$ is largely 
determined by the following lemma.
\begin{lemma}\label{keylemma}
If $T$ be a triangle in $G$ where the vertices have degrees $d_1,d_2,d_3$ then $bk(G) \geq \frac{d_1 + d_2 + d_3 - n}{3}$
\end{lemma}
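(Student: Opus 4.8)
The plan is to run a short inclusion–exclusion argument on the three neighborhoods of the triangle's vertices. Write the triangle $T$ on vertices $u,v,w$ with $\deg(u)=d_1$, $\deg(v)=d_2$, $\deg(w)=d_3$, and let $N(u),N(v),N(w)$ denote their neighborhoods in $G$. First I would record the basic observation that the book built on an edge $xy$ of $G$ has size exactly $|N(x)\cap N(y)|$, since the pages of that book are precisely the common neighbors of $x$ and $y$; in particular $w\in N(u)\cap N(v)$ (and cyclically), so each of these three intersections is nonempty and, by definition of $bk(G)$,
\[
|N(u)\cap N(v)|,\ |N(u)\cap N(w)|,\ |N(v)\cap N(w)|\ \le\ bk(G).
\]

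Next I would apply inclusion–exclusion to the union $N(u)\cup N(v)\cup N(w)$, which is a subset of the $n$-element vertex set of $G$ (note that $u,v,w$ themselves lie in this union, since each is adjacent to the other two, but all we need is that the union has at most $n$ vertices). This gives
\[
n \ \ge\ d_1+d_2+d_3 \ -\ \bigl(|N(u)\cap N(v)|+|N(u)\cap N(w)|+|N(v)\cap N(w)|\bigr)\ +\ |N(u)\cap N(v)\cap N(w)|.
\]
Since the triple-intersection term is nonnegative, I can drop it and rearrange to obtain
\[
d_1+d_2+d_3-n \ \le\ |N(u)\cap N(v)|+|N(u)\cap N(w)|+|N(v)\cap N(w)| \ \le\ 3\,bk(G),
\]
and dividing by $3$ yields the claimed bound $bk(G)\ge \frac{d_1+d_2+d_3-n}{3}$.

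I do not anticipate a genuine obstacle here; the lemma is essentially a one-line counting fact. The only points that need a moment's care are (i) identifying the size of a book on an edge with the common-neighborhood size, and (ii) confirming that the three vertices of $T$ do not cause any double-counting problem — they are handled automatically because we only use the inequality $|N(u)\cup N(v)\cup N(w)|\le n$ rather than an exact count. Everything else is the standard three-set inclusion–exclusion identity together with discarding a nonnegative term.
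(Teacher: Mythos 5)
Your proof is correct, and it is essentially the same argument as the paper's: both hinge on the fact that the vertices adjacent to at least two vertices of $T$ number at least $d_1+d_2+d_3-n$, and that these are distributed among the three edges of $T$, each of which lies in at most $bk(G)$ triangles. The paper packages this as a count of triangles sharing an edge with $T$ (via $\binom{k}{2}\ge k-1$ and pigeonhole) while you use inclusion--exclusion on the three neighborhoods and bound each pairwise codegree by $bk(G)$; your version is a slightly cleaner rendering of the same double count.
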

\begin{proof}
If $v$ is a vertex not in $T$, let $deg_T(v)$ be the number of vertices in $T$ which are adjacent to $v$. Now let $x$ be the number 
of triangles excluding $T$ which share an edge with $T$.
$$x = \sum_{v \notin T}{{{deg_T(v)} \choose 2}} \geq \sum_{v \notin T}{(deg_T(v) - 1)} = (\sum_{v \notin T}{deg_T(v)}) - (n-3) = 
d_1 + d_2 + d_3 - 6 - (n-3)$$
By the pigeonhole principle, one edge of $T$ must be contained in at least $\frac{x}{3}$ triangles excluding $T$, so \\
$bk(G) \geq \frac{x}{3} + 1 = \frac{d_1 + d_2 + d_3 - n}{3}$, as needed.
\end{proof}
To analyze $G$, we begin by splitting the vertices of $G$ into two parts depending on their degree. 
\begin{definition} \noindent
\begin{enumerate}
\item Let $V_H$ be the set of vertices with degree greater than $\frac{2n}{5}$ and let $n_h = |V_H|$
\item Let $V_L$ be the set of vertices with degree at most $\frac{2n}{5}$ and let $n_l = |V_L|$
\end{enumerate}
\end{definition}
We now show that under our assumptions $n_l$ is small with the following lemma.
\begin{lemma}\label{fewlowdegreevertices}
If $f(n) \leq \frac{n}{1000}$ and $bk(G) \leq \frac{n}{1000}$ then $n_l \leq 20f(n)$.
\end{lemma}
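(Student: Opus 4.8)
The plan is to establish the bound by combining two independent inequalities — one obtained by counting edges, one by a convexity estimate on the degree sequence — each of which uses Lemma~\ref{keylemma} in a different way.

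\textbf{Edge counting.} First I would deduce from Lemma~\ref{keylemma} that $G[V_H]$ is triangle-free: three vertices of $V_H$ have degree sum greater than $\tfrac{6n}{5}$, so if they spanned a triangle we would get $bk(G) > \tfrac13\bigl(\tfrac{6n}{5}-n\bigr) = \tfrac{n}{15} > \tfrac{n}{1000}$, a contradiction. Hence Mantel's theorem gives $e(V_H) \le n_h^2/4$, and since all other edges are incident to $V_L$ we get $|E(G)| \le e(V_H) + \sum_{v\in V_L}\deg(v) \le \tfrac{(n-n_l)^2}{4} + \tfrac{2n}{5}n_l$. Substituting $|E(G)| = \tfrac{n^2}{4} - nf(n)$ and simplifying, this rearranges to $n_l\bigl(\tfrac{2n}{5}-n_l\bigr) \le 4nf(n)$. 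Since $4nf(n) \le \tfrac{n^2}{250}$ stays below the maximum $\tfrac{n^2}{25}$ of the left-hand parabola, $n_l$ must lie outside an interval $(r_1,r_2)$ strictly inside $(0,\tfrac{2n}{5})$; a short computation using $\sqrt{1-x}\ge 1-x$ shows $r_1 \le 20f(n)$ and $r_2 \ge \tfrac{2n}{5} - 20f(n) \ge \tfrac{19n}{50}$. So either $n_l \le 20f(n)$ (what we want) or $n_l \ge \tfrac{19n}{50}$.

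\textbf{Ruling out $n_l \ge \tfrac{19n}{50}$.} Discarding the third vertex of the triangle in Lemma~\ref{keylemma} gives $\deg x + \deg y \le n + 3bk(G)$ for every edge $xy$, so $\sum_v \deg(v)^2 = \sum_{xy\in E(G)}(\deg x + \deg y) \le |E(G)|\,(n+3bk(G))$. On the other hand $\sum_v\deg(v) = 2|E(G)|$ is fixed, and writing $\sigma = \sum_{v\in V_L}\deg(v) \le \tfrac{2n}{5}n_l$ and applying Cauchy--Schwarz separately on $V_L$ and on $V_H$ gives $\sum_v\deg(v)^2 \ge \tfrac{\sigma^2}{n_l} + \tfrac{(2|E(G)|-\sigma)^2}{n-n_l}$; since $f(n)\le\tfrac{n}{20}$ this lower bound is decreasing in $\sigma$ on $[0,\tfrac{2n}{5}n_l]$, hence is at least its value at $\sigma = \tfrac{2n}{5}n_l$. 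Combining the two bounds and clearing denominators, the $n^2$ and lower-order terms organize so that the surviving coefficient of $n_l$ on the left is $\ge \tfrac{n^3}{100}$ while the right side equals $|E(G)|\,n\,(4f(n)+3bk(G)) \le \tfrac{n^3}{4}(4f(n)+3bk(G))$; therefore $n_l \le 25\,(4f(n)+3bk(G)) \le \tfrac{175n}{1000} = \tfrac{7n}{40}$. Since $\tfrac{7n}{40} < \tfrac{19n}{50}$, the second alternative above is impossible, so $n_l \le 20f(n)$.

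I expect the second step to be the delicate one. The inequality $\sum_v\deg(v)^2 \le |E(G)|(n+3bk(G))$ is immediate, but the algebra afterward must be carried out with no slack — one has to keep the $f(n)$ and $bk(G)$ contributions through the entire computation and observe that the coefficient of $n^3$ comes out to exactly $\tfrac1{100}$ — and one must separately verify the auxiliary numerical facts (that $f(n)\le\tfrac{n}{100}$ so the edge-count discriminant is nonnegative, that $f(n)\le\tfrac{n}{20}$ for the monotonicity in $\sigma$, and that $\tfrac{7n}{40} < \tfrac{19n}{50}$), all of which hold under the hypothesis $f(n),bk(G)\le\tfrac{n}{1000}$.
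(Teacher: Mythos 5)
Your proof is correct and follows essentially the same strategy as the paper: show $G[V_H]$ is triangle-free via Lemma \ref{keylemma} to bound $e(V_H)$ by $n_h^2/4$, derive the quadratic inequality in $n_l$ from the edge count, and eliminate the large-$n_l$ branch using the upper bound $\sum_v d(v)^2 \leq |E(G)|(n + O(bk(G)))$ (which every edge being in a triangle forces) against a convexity lower bound on $\sum_v d(v)^2$ coming from the low-degree vertices. The only cosmetic differences are that you invoke Mantel rather than Andr\'asfai--Erd\H{o}s--S\'os, you get the constant $3bk(G)$ rather than $bk(G)$ in the degree-sum bound, and you phrase the second-moment step as a two-block Cauchy--Schwarz rather than a variance computation.
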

\begin{proof}
Applying the following structural graph theorem proved by Andrasfai, Erdos, and Sas \cite{structuralresult} with $r = 2$, the induced 
subgraph of $G$ on the set of vertices $V_H$ must either contain a triangle (in which case $bk(G) > \frac{n}{15}$ 
by Lemma \ref{keylemma}) or it must be bipartite:
\begin{theorem}
If $G$ is a $K_{r+1}$-free graph with $n$ vertices and minimal degree greater than $(1 - \frac{3}{3r-1})n$ then $G$ is 
$r$-colorable
\end{theorem}
Now lets count up the total number of possible edges in $G$. There are at most $\frac{(n_h)^2}{4} = \frac{(n - n_l)^2}{4}$ edges 
between vertices in $V_H$ and there are at most $\frac{2n}{5}n_l$ edges containing a vertex in $V_L$. This gives a total 
of $\frac{n^2}{4} - \frac{n}{10}n_l + \frac{(n_l)^2}{4}$ possible edges. However, by definition $G$ has exactly 
$(\frac{1}{4} - \frac{f(n)}{n})n^2$ edges. 

Thus we have that $\frac{n^2}{4} - \frac{n}{10}n_l + \frac{(n_l)^2}{4} \geq (\frac{1}{4} - \frac{f(n)}{n})n^2$. 
Multiplying this equation by $-\frac{10}{n}$ and rearranging the terms we have that $n_l - \frac{5(n_l)^2}{2n} \leq 10f(n)$. 
If $n_l < \frac{n}{5}$ then the result follows, so we just need to prove that $n_l < \frac{n}{5}$. For this, we will use 
the following lemma, which is a weaker but simpler version of Theorem 1 of Bollobas and Nikiforov \cite{bollobas} which is 
sufficient for our purposes.
\begin{lemma}
For any graph $G$, $\sum_{v \in V(G)}{d(v)^2} \leq |E(G)|(n + bk(G))$
\end{lemma}
\begin{proof}
Note that 
$$\sum_{v \in V(G)}{d(v)^2} = \sum_{v \in V(G)}{\sum_{e = \{v_1,v_2\} \in E(G): \atop v \in \{v_1,v_2\}}{d(v)}} = 
\sum_{e = \{v_1,v_2\} \in E(G)}{(d(v_1) + d(v_2))}$$ 
We bound this with the following proposition.
\begin{proposition}
For any edge $e = \{v_1,v_2\} \in E(G)$, $d(v_1) + d(v_2) \leq n + bk(G)$
\end{proposition}
\begin{proof}
Given an edge $e = \{v_1,v_2\}$, for each i $\in \{0,1,2\}$ let $m_i$ be the number of vertices in 
$V(G) \setminus \{v_1,v_2\}$ which are adjacent to $i$ vertices in $\{v_1,v_2\}$. We have the following equations
\begin{enumerate}
\item $m_0 + m_1 + m_2 = n - 2$
\item $d(v_1) + d(v_2) - 2 = m_1 + 2m_2$
\end{enumerate}
Subtracting the first equation from the second equation gives $m_2 - m_0 = d(v_1) + d(v_2) - n \leq m_2 \leq bk(G)$. Thus, 
$d(v_1) + d(v_2) \leq n + bk(G)$.
\end{proof}
Using this proposition, 
$\sum_{v \in V(G)}{d(v)^2} \leq |E(G)|(n + bk(G))$, as needed.
\end{proof}
We can now prove that $n_l < \frac{n}{5}$. Consider the quantity $\sum_{v \in V(G)}{(d(v) - \frac{2|E(G)|}{n})^2}$. On one hand, 
\begin{align*}
\sum_{v \in V(G)}{\left(d(v) - \frac{2|E(G)|}{n}\right)^2} &= 
\sum_{v \in V(G)}{\left(d(v)^2 - 4d(v)\frac{|E(G)|}{n} + \frac{4|E(G)|^2}{n^2}\right)} = \sum_{v \in V(G)}{d(v)^2} - \frac{4|E(G)|^2}{n} \\
&\leq (n + bk(G) - \frac{4|E(G)|}{n})|E(G)| = (bk(G) + 4f(n))|E(G)| \\
&\leq \frac{n^2}{4}(bk(G) + 4f(n)) \leq \frac{n^3}{800}
\end{align*}
On the other hand, if $n_l \geq \frac{n}{5}$ then we already have $\frac{n}{5}$ vertices with degree less than $\frac{2n}{5}$ so 
\begin{align*}
\sum_{v \in V(G)}{(d(v) - \frac{2|E(G)|}{n})^2} &\geq \frac{n}{5}(\frac{2n}{5} - \frac{n}{2} + 2f(n))^2 \\
&= \frac{n}{5}(\frac{n}{10} - 2f(n))^2 \\
&> \frac{n^3}{800}
\end{align*}
This is a contradiction, which completes the proof that $n_l < \frac{n}{5}$ and thus the proof of the lemma.
\end{proof}
Now that we have shown Lemma \ref{fewlowdegreevertices}, we are ready to prove our bound.
\begin{theorem}
If $f(n) \leq \frac{n}{1000}$ and $bk(G) \leq \frac{n}{1000}$ then $f(n)(f(n) + bk(G))bk(G) \geq \frac{n^2}{2000}$
\end{theorem}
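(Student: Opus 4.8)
The plan is to carry out the two-sided count of triangles with two vertices in $V_H$ and one in $V_L$ sketched in Section \ref{sketch}, taking care with the parts where $G$ departs from the clean structure described there.

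\emph{Setup.} By Lemma \ref{fewlowdegreevertices}, $n_l \le 20f(n)$; and, as in its proof, a triangle inside $V_H$ would force $bk(G) > n/15$, contradicting the standing hypothesis $bk(G)\le n/1000$, so $G[V_H]$ is bipartite — fix a bipartition $V_H = A \cup B$. Since each vertex of $V_L$ has degree at most $\tfrac{2n}{5}$, at most $\tfrac{2n}{5}n_l \le 8nf(n)$ edges of $G$ meet $V_L$, so the bipartite graph $G[A,B]$ has at least $e(A,B) \ge \tfrac{n^2}{4} - nf(n) - 8nf(n) \ge \tfrac{241n^2}{1000}$ edges (using $f(n)\le n/1000$). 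The same inequality together with $e(A,B) \le |A||B| \le \big(\tfrac{n-n_l}{2}\big)^2$ shows that the number of missing edges $M := |A||B| - e(A,B)$ of $G[A,B]$ satisfies $M \le nf(n)$; in other words $G[A,B]$ is nearly complete bipartite.

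\emph{The two counts.} Because $G[A,B]$ is triangle-free, every edge of $G$ between $A$ and $B$ lies in a triangle with its third vertex in $V_L$; let $\mathcal{T}$ be the family of all such triangles (one vertex in each of $A$, $B$, $V_L$). Since every $A$–$B$ edge contributes at least one member of $\mathcal{T}$, we get $|\mathcal{T}| \ge e(A,B) \ge \tfrac{241n^2}{1000}$. For the upper bound, split $\mathcal T = \bigcup_{v\in V_L}\mathcal T_v$ by the $V_L$-vertex and put $A_v = N(v)\cap A$, $B_v = N(v)\cap B$, so $|\mathcal T_v|$ is the number of edges of $G$ between $A_v$ and $B_v$. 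Three facts control this: (i) each $u \in A_v$ has at most $bk(G)$ neighbors in $B_v$ (each gives a triangle on the edge $uv$, which lies in at most $bk(G)$ triangles), hence $|\mathcal T_v| \le bk(G)\min(|A_v|,|B_v|)$; (ii) a non-edge inside $A_v\times B_v$ is a missing edge of $G[A,B]$, so $|\mathcal T_v| \ge |A_v||B_v| - M$, whence $\min(|A_v|,|B_v|)\big(\max(|A_v|,|B_v|)-bk(G)\big) \le M \le nf(n)$; and (iii) for $\{u,v,w\}\in\mathcal T_v$ Lemma \ref{keylemma} gives $d(u)+d(v)+d(w)\le n+3bk(G)$, while $d(u)\ge |B|-\mathrm{def}_B(u)$ and $d(w)\ge |A|-\mathrm{def}_A(w)$ (with $\mathrm{def}$ counting missing edges at that vertex), so $d(v)\le 3bk(G)+n_l+\mathrm{def}_B(u)+\mathrm{def}_A(w)$, and $\sum_{u\in A}\mathrm{def}_B(u)=\sum_{w\in B}\mathrm{def}_A(w)=M\le nf(n)$ forces all but few high-degree vertices to carry tiny deficiency. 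Combining (i)–(iii) — bounding $|\mathcal T|$ by $bk(G)\sum_v\min(|A_v|,|B_v|)$, then using the missing-edge budget and the degree bound to show that this sum, and the set of $V_L$-vertices (and of their triangles) that actually contribute, is $O(f(n)(f(n)+bk(G)))$ — should give $|\mathcal T| = O\big(f(n)(f(n)+bk(G))bk(G)\big)$, which against $|\mathcal T| \ge \tfrac{241n^2}{1000}$ yields the theorem after tracking constants.

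\emph{Where the difficulty lies.} The naive use of (i)–(iii) only delivers $f(n)\,bk(G)=\Omega(n)$, which is weaker than what is claimed: a low-degree vertex can have degree as large as $\tfrac{2n}{5}$, and the degree sequence leaves room for up to $\Theta(f(n))$ such vertices, so the clean picture ``every low-degree vertex has degree $O(f(n)+bk(G))$'' from the sketch need not hold. The real work is to show that a low-degree vertex of non-negligible degree is nevertheless incident to only $O\big(bk(G)(f(n)+bk(G))\big)$ triangles of $\mathcal T$, and that in aggregate the $V_L$-vertices account for only $O(f(n)(f(n)+bk(G)))$ of the low–high edges that sit in $\mathcal T$-triangles. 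This should follow from the interplay of the budget $M\le nf(n)$, the bound $d(v)\le 3bk(G)+n_l+\mathrm{def}_B(u)+\mathrm{def}_A(w)$, and the fact that every $A$–$B$ edge needs a $V_L$-completer (so the $V_L$-neighborhoods in $B$ must jointly dominate $N(u)\cap B$ for each $u\in A$, forcing the structure to be robust). Making this quantitative with constants good enough to reach $\tfrac{n^2}{2000}$ is the heart of the proof, and I expect it to be the main obstacle.
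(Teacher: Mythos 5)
Your framework --- a two-sided count of triangles with two vertices in $V_H$ and one in $V_L$ --- is the same as the paper's, and your lower bound of $\Omega(n^2)$ on the number of such triangles is sound. But the upper bound, which is where the theorem actually lives, is not carried out: you correctly observe that a vertex of $V_L$ may have degree as large as $\frac{2n}{5}$, and you then defer the resolution of exactly this point (your ``heart of the proof'', ``main obstacle'') to an unexecuted combination of facts (i)--(iii). Those facts do not suffice as stated. For a vertex $v\in V_L$ with $a=\max(|A_v|,|B_v|)$ and $b=\min(|A_v|,|B_v|)$, the missing-edge budget gives only $b(a-bk(G))\le M=O(nf(n))$, and optimizing $|\mathcal{T}_v|\le bk(G)\,b$ against this constraint yields a per-vertex bound of order $nf(n)$ (attained when $a$ is just above $2bk(G)$), hence $O(nf(n)^2)$ in total over the $\le 20f(n)$ vertices of $V_L$; this is far weaker than the required $O(f(n)(f(n)+bk(G))bk(G))$ whenever $bk(G)\ll n$. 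Aggregating over $v$ does not help, since a single missing edge of $G[A,B]$ can be charged to many different $v$. So there is a genuine gap, and it is precisely the step you flagged but did not close.

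The paper closes it by changing what is counted on the \emph{lower-bound} side rather than trying to control every vertex of $V_L$ on the upper-bound side. Call an edge $\{v_1,v_2\}$ of $G[V_H]$ well-behaved if $d(v_1)+d(v_2)\ge n-2bk(G)-5f(n)$. By Lemma \ref{keylemma}, the third vertex of \emph{any} triangle containing such an edge has degree at most $n+3bk(G)-d(v_1)-d(v_2)\le 5(f(n)+bk(G))$, so every well-behaved edge yields a triangle whose $V_L$-vertex automatically has tiny degree --- the intermediate-degree vertices of $V_L$ that cause you trouble never enter the count. An averaging argument (using $\sum_{e=\{v_1,v_2\}}(d(v_1)+d(v_2))=\sum_v d(v)^2\ge 4|E(G)|^2/n$ together with the fact, from the Proposition inside Lemma \ref{fewlowdegreevertices}, that $d(v_1)+d(v_2)\le n+bk(G)$ for every edge, so no single edge can contribute much positive excess) shows that at least $\frac{n^2}{25}$ edges of $G[V_H]$ are well-behaved. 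The upper bound then only needs to count triangles whose $V_L$-vertex has degree at most $5(f(n)+bk(G))$: there are at most $20f(n)$ such vertices, hence at most $100f(n)(f(n)+bk(G))$ incident edges, each lying in at most $bk(G)$ triangles, for at most $50f(n)(f(n)+bk(G))bk(G)$ such triangles. This ``restrict the lower bound so the upper bound becomes easy'' device is the idea your proposal is missing; your item (iii) gestures toward it via deficiencies, but the quantitative content is exactly the part you left undone.
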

\begin{proof}
The idea of the proof is to count the number of triangles satisfying the following property:
\begin{definition}
We call a triangle of $G$ well-behaved if it contains two vertices in $V_H$ and one vertex in $V_L$ 
of degree at most $5(f(n) + bk(G))$.
\end{definition}
To lower bound the number of such triangles, we count the number of edges between vertices of $V_H$ 
satisfying the following property
\begin{definition}
Call an edge $e = \{v_1,v_2\}$ between two vertices of $V_H$ well-behaved if \\
$d(v_1) + d(v_2) \geq n - 2bk(G) - 5f(n)$
\end{definition}
\begin{proposition}
There are at least as many well-behaved triangles as there are well-behaved edges.
\end{proposition}
\begin{proof}
By our assumptions about $G$, every well-behaved edge of $G$ must be in a triangle and by 
Lemma \ref{keylemma} this triangle must be well-behaved.
\end{proof}
\begin{lemma}
There are at least $\frac{n^2}{25}$ well-behaved edges.
\end{lemma}
\begin{proof}
Let $x$ be the number of well-behaved edges and consider the expression \\
$\sum_{e = \{v_1,v_2\} \in E(G)}{(d(v_1) + d(v_2) - n + 4f(n) + bk(G))}$ \\
On the one hand, using the Cauchy-Schwarz inequality, 
\begin{align*}
\sum_{e = \{v_1,v_2\} \in E(G)}{(d(v_1) + d(v_2) - n + 4f(n) + bk(G))} &= 
\sum_{v \in V(G)}{d(v)^2} - (n - 4f(n) - bk(G))|E(G)| \\
&\geq \frac{1}{n}\left(\sum_{v \in V(G)}{d(v)}\right)^2 - (n - 4f(n) - bk(G))|E(G)| \\
&= 4\frac{|E(G)|^2}{n} - (n - 4f(n) - bk(G))|E(G)| \\
&= (n - 4f(n))|E(G)| - (n - 4f(n) - bk(G))|E(G)| \\
&\geq 0
\end{align*}
On the other hand, since the maximum degree of a vertex in $v_L$ is $\frac{n}{5}$, there are at most $\frac{2n}{5}n_l \leq 8nf(n) \leq \frac{n^2}{125}$ edges which are not between vertices of $V_H$. Now for any edge between vertices in $V_H$ which is not 
well-behaved, it will make a negative contribution to this sum of at least $f(n) + bk(G)$, which gives a total negative 
contribution of at least $(\frac{n^2}{4} - nf(n) - x - \frac{n^2}{125})(f(n) + bk(G))$. For any edge $e = \{v_1,v_2\}$, $d(v_1) + d(v_2) - n \leq bk(G)$ so the 
contribution to this sum from $e$ is at most $4(f(n) + bk(G))$. Thus, the total positive contribution to the sum is 
at most $4(x + \frac{n^2}{125})(f(n) + bk(G))$. Putting everything together we have that 
\begin{align*}
\sum_{e = \{v_1,v_2\} \in E(G)}&{(d(v_1) + d(v_2) - n + 4f(n) + bk(G))} \\
&\leq -(\frac{n^2}{4} - nf(n) - x - \frac{n^2}{125})(f(n) + bk(G)) + 4(x + \frac{n^2}{125})(f(n) + bk(G)) \\
&\leq (5x + nf(n) - \frac{n^2}{4} + \frac{n^2}{25})(f(n) + bk(G))
\end{align*}
From before, this must be nonnegative so we have that 
$x \geq \frac{n^2}{5}(\frac{1}{4} - \frac{1}{25} - \frac{1}{1000}) > \frac{n^2}{25}$, as needed.
\end{proof}
Since there are at least as many well-behaved triangles as there are edges, $G$ must contain at least $\frac{n^2}{25}$ 
well-behaved triangles. We now complete the proof of Theorem \ref{mainresult} by upper bounding the number of well-behaved triangles. Note that every 
well-behaved triangle must contain two edges incident with a vertex of degree at most 
$5(f(n) + bk(G))$. However, there are at most $n_l \leq 20f(n)$ such vertices, so there can be at most 
$100f(n)(f(n) + bk(G))$ such edges. By definition, each such edge can only appear in $bk(G)$ triangles so 
there can be at most $\frac{100f(n)(f(n) + bk(G))bk(G)}{2} = 50f(n)(f(n) + bk(G))bk(G)$ well-behaved triangles. 

Putting everything together, 
$50f(n)(f(n) + bk(G))bk(G) \geq \frac{n^2}{25}$ so we obtain that \\
$f(n)(f(n) + bk(G))bk(G) \geq \frac{n^2}{1250}$, as 
claimed.
\end{proof}
\section{Conclusion}\label{conclusion}
In this paper, we improved the lower bounds on $\gamma(n,f)$ whenever $f$ is $\Theta(n^c)$ for all 
$c \in (\frac{2}{5},1)$. With these results, we now know $\gamma(n,f)$ to within a constant factor whenever 
$f$ is $\Theta(n^c)$ and $c \in [0,\frac{2}{3}]$. However, this raises several new questions. First, can we merge 
the upper bounds of Fox, Loh \cite{fox} and Bollob\'{a}s, Nikiforov \cite{bollobas} to obtain improved upper bounds when 
$f$ is $\Theta(n^c)$ and $c \in (\frac{2}{3},1)$? Second, can we merge our new lower bounds with the lower bounds of 
Fox \cite{fox}? We have a long ways to go before we fully understand $h(n,c)$ and $\gamma(n,f)$.


\begin{thebibliography}{1}

\bibitem{structuralresult} B. Andr\'{a}sfai, P. Erd\H{o}s, V.T. S\'{o}s, On the connection between chromatic number, maximal clique and minimal degree of a graph, Discrete Math. 8 (1974) p. 205-218.

\bibitem{bollobas} B. Bollob\'{a}s and V. Nikiforov, Books in graphs, European J. Combin. 26 (2005), p. 259-270.

\bibitem{erdosbook} F. Chung and R. Graham, Erd\H{o}s on graphs. His legacy of unsolved problems. A K
Peters, Ltd., Wellesley, MA, 1998.

\bibitem{edwards} C.S. Edwards, A lower bound for the largest number of triangles with a common edge, 1977
(unpublished manuscript)

\bibitem{erdosone} P. Erd\H{o}s, On a theorem of Rademacher-Tur\'{a}n, Illinois J. Math. 6 (1962), p. 122-127.

\bibitem{erdostwo} P. Erd\H{o}s, Some problems on finite and infinite graphs. Logic and combinatorics (Arcata, Calif.,
1985), p. 223-228, Contemp. Math., 65, Amer. Math. Soc., Providence, RI, 1987.

\bibitem{erdosthree} P. Erd\H{o}s, Problems and results in combinatorial analysis and graph theory, Proceedings
of the First Japan Conference on Graph Theory and Applications (Hakone, 1986), Discrete
Math. 72 (1988), p. 81-92.

\bibitem{erdosfour} P. Erd\H{o}s, Some of my favourite problems in various branches of combinatorics. Combinatorics
92 (Catania, 1992). Matematiche (Catania) 47 (1992), no. 2, p. 231-240

\bibitem{moreerdosone} P. Erd\H{o}s, R. Faudree and E. Gy\"{o}ri, On the book size of graphs with large minimum degree,
Studia Sci. Math. Hungar. 30 (1995), p. 25-46.

\bibitem{moreerdostwo} P. Erd\H{o}s, R. Faudree, C. Rousseau, Extremal problems and generalized degrees, Graph Theory
and Applications (Hakone, 1990), Discrete Math. 127 (1994), p. 139-152.

\bibitem{ramseyone} R.J. Faudree, C.C. Rousseau, J. Sheehan, More from the good book, Proceedings of the
Ninth Southeastern Conference on Combinatorics, Graph Theory, and Computing, Florida
Atlantic Univ., Boca Raton, Fla., 1978, p. 289-299. Congress. Numer. XXI, Utilitas Math.,
Winnipeg, Man., 1978.

\bibitem{fox} J. Fox and P.S. Loh, On a problem of Erd\H{o}s and Rothschild on edges in triangles. Combinatorica Vol. 32 Issue 6. (2012)
p. 619-628

\bibitem{russianpaper} N. Khad\u{z}iivanov and V. Nikiforov, Solution of a problem of P. Erd\H{o}s about the maximum
number of triangles with a common edge in a graph, C. R. Acad. Bulgare Sci. 32 (1979) p. 1315-1318

\bibitem{ramseytwo} V. Nikiforov and C. C. Rousseau, Large generalized books are p-good, J. Combin. Theory Ser.
B 92 (2004), p. 85-97.

\bibitem{ramseythree} V. Nikiforov and C. C. Rousseau, A note on Ramsey numbers for books, J. Graph Theory 49
(2005), p. 168-176.

\bibitem{ramseyfour} V. Nikiforov and C. C. Rousseau, Book Ramsey numbers, I. Random Structures Algorithms
27 (2005), p. 379-400.

\bibitem{ramseyfive} V. Nikiforov, C. C. Rousseau, and R. H. Schelp, Book Ramsey numbers and quasi-randomness,
Combin. Probab. Comput. 14 (2005), p. 851-860.

\bibitem{ramseysix} C. C. Rousseau and J. Sheehan, On Ramsey numbers for books, J. Graph Theory 2 (1978),
p. 77-87.

\bibitem{ruzsa} I. Z. Ruzsa and E. Szemer\'{e}di, Triple systems with no six points carrying three triangles.
Combinatorics (Proc. Fifth Hungarian Colloq., Keszthely, 1976), Vol. II, p. 939-945, Colloq.
Math. Soc. Jnos Bolyai, 18, North-Holland, Amsterdam-New York, 1978.


\bibitem{ramseyseven} B. Sudakov, Large Kr-free subgraphs in Ks-free graphs and some other Ramsey-type problems,
Random Structures Algorithms 26 (2005), p. 253-265.

\end{thebibliography}
\end{document}